\documentclass[10pt]{article}
\usepackage{amsmath,amssymb}
\usepackage{enumerate}


\allowdisplaybreaks[1]

\newtheorem{theorem}{Theorem}[section]

\newtheorem{lemma}[theorem]{Lemma}

\newtheorem{definition}[theorem]{Definition}
\newtheorem{remark}[theorem]{Remark}

\newcommand{\Z}{\mathbb{Z}}

\renewcommand{\ker}{\operatorname{Ker}}
\newcommand{\id}{\operatorname{id}}

\newcommand{\Sym}{\operatorname{Sym}}
\newcommand{\aut}{\operatorname{Aut}}

\newcommand{\soc}{\operatorname{Soc}}

\newcommand{\Aut}{\operatorname{Aut}}

\newcommand{\gr}{\operatorname{gr}}

\newenvironment{proof}{\par\noindent{\bf Proof.}}{$\qed$\par\bigskip}
\newcommand{\qed}{\enspace\vrule  height6pt  width4pt  depth2pt}
\usepackage{color}

\begin{document}
\title{Primitive set-theoretic solutions of the Yang-Baxter equation\thanks{The first author was partially
supported by the grants MINECO-FEDER  MTM2017-83487-P and AGAUR
2017SGR1725 (Spain). The second author is supported in part by
Onderzoeksraad of Vrije Universiteit Brussel and Fonds voor
Wetenschappelijk Onderzoek (Belgium). The third author is supported
by the National Science Centre  grant. 2016/23/B/ST1/01045 (Poland).
2010 MSC: Primary 16T25, 20B15, 20F16. Keywords: Yang-Baxter
equation, set-theoretic solution, primitive group, brace.} }
\author{F. Ced\'o \and E. Jespers \and J. Okni\'{n}ski}
\date{}

\maketitle

\begin{abstract}
To every involutive non-degenerate set-theoretic solution $(X,r)$
of the Yang-Baxter equation on a finite set $X$ there is a
naturally associated finite solvable permutation group ${\mathcal
G}(X,r)$ acting on $X$. We prove that every primitive permutation
group of this type is of prime order $p$. Moreover, $(X,r)$ is
then a so called permutation solution determined by a cycle of
length $p$. This solves a problem recently asked by A.
Ballester-Bolinches. The result opens a new perspective on a
possible approach to the classification problem of all involutive
non-degenerate set-theoretic solutions.
\end{abstract}

\section{Introduction}

A fundamental open problem is to construct  all  solutions of the Yang-Baxter equation
  $$R_{12}R_{23}R_{12} = R_{23}R_{12}R_{23},$$
where $V$ is a vector space and $R:V\otimes V \rightarrow V\otimes
V$ is a bijective linear transformation. Here $R_{ij}$ denotes the
map $V\otimes V\otimes V \rightarrow V\otimes V\otimes V$ acting as
$R$ on the $(i,j)$ tensor factors and as the identity on the
remaining factor. In the context of quantum groups and Hopf algebras
such solutions play a fundamental role and are often referred to as
R-matrices (see for example \cite{BrownGoodearl,K}). Drinfeld in
\cite{drinfeld} suggested the study of the set-theoretic solutions
of the Yang-Baxter equation, these are the bijective maps $r : X
\times  X \rightarrow X \times X$, defined for a nonempty set $X$,
such that
 $$r_{12}r_{23}r_{12} = r_{23}r_{12}r_{23}$$
where  $r_{ij}$ denotes the map $X \times X \times X \rightarrow X
\times X \times X$ acting as $r$ on the $(i,j)$ components and as
the identity on the remaining component. In their fundamental
papers, Gateva-Ivanova and Van den Bergh \cite{GIVdB}, and Etingof,
Schedler and Soloviev \cite{ESS} introduced a subclass of the
set-theoretic of solutions, the involutive non-degenerate solutions.
Recall that a set-theoretic solution $r : X \times  X \rightarrow  X
\times  X$ of the Yang-Baxter equation, written in the form $r(x,y)
= (\sigma_x(y),\gamma_y(x))$, for $x,y \in X$, is involutive if
$r^{2} = \id$, and it is non-degenerate if $\sigma_x$ and $\gamma_x$
are bijective maps from $X$ to $X$, for all $ x \in X$.  Actually,
\cite{ESS} and \cite{GIVdB} initiated new perspectives for
developing and applying a variety of algebraic methods in this
context. In particular, they introduced the associated structure
groups  and quadratic algebras with very nice structural and
arithmetical properties when $X$ is finite. More recently, a new
tool was introduced by Rump in \cite{R07}, based on algebraic
structures called (left) braces. This attracted a lot of attention
(including contributions of Bachiller, Brzezi\'nski, Catino,
Gateva-Ivanova, Rump, Smoktunowicz, Vendramin, see for example
\cite{Brz19,DS, GI18, Smok} and the references in these papers) and
lead to several breakthroughs in the area. In particular it resulted
in several new constructions of classes of solutions and it allowed
to solve some open problems \cite{B16,CCS, CJOComm, CJOabund, Smok}.
Furthermore, many new deep and intriguing connections to a variety
of other areas were discovered \cite{CedoSurvey, RumpSurvey}.

Remaining  and  challenging fundamental problems  in the area
of involutive non-degenerate set-theoretic solutions of the
Yang-Baxter equations are: constructing new classes of solutions and
a program to classify all solutions. One of the very fruitful
approaches in this context is based on two notions: first,
indecomposable solutions and, second, retractable solutions
\cite{ESS}. In both cases, the idea behind these notions is to show
that several classes of solutions come from solutions of smaller
cardinality. And therefore, the main effort should be to
characterize the minimal `building blocks' and to describe the way
how they can be used to construct arbitrary solutions.

In this direction, a fundamental result of Rump \cite{Rump1} shows
that all finite square-free involutive  non-degenerate set-theoretic
solutions $(X,r)$, with $|X|>1$, are decomposable (into smaller
solutions that are also square-free). However, it turned out that
this is no longer true in full generality. The second approach, via
the retract relation, allowed to introduce the notion of
multipermutation solutions and to define a measure of their
complexity - the multipermutation level. Certain positive results in
this direction were obtained in \cite{CJO} and later in \cite{BCV}
it was shown that finite multipermutation solutions are
characterized by the condition saying that the structure group of
the solution is a poly-${\mathbb Z}$-group. However, this approach
also fails in full generality because there exist solutions that are
not retractable.  Namely, an example of an indecomposable and
irretractable solution was already given in \cite{JO} (see
Example~8.2.14 in \cite{JObook}); this was the first example of a
solution whose structure group is not a poly-$\mathbb{Z}$ group.

Every  non-degenerate involutive  solution $(X,r)$ is  equipped
with a permutation group ${\mathcal G}(X,r)$ acting on the set
$X$. Recently, Ballester-Bolinches  stated (in a talk
\cite{BB} during a workshop in Oberwolfach  \cite{Ballesteros})
the question of describing all the finite primitive solutions,
i.e. those solutions with primitive permutation group.

In this paper we prove that every finite primitive solution $(X,r)$
is of prime order, i.e. $|X|$ is a prime number, and it is known
that all such solutions admit a very simple description. Namely,
they are permutation solutions determined by a cyclic permutation of
length $p$.

This opens a new perspective on the classification problem of all
solutions as the notion of a primitive solution is much better than
that of an indecomposable solution, because, roughly speaking, this
shows that every solution $(X,r)$ which is not of the form described
in Theorem~\ref{main} is built on an information coming from its
imprimitivity blocks, which are sets of smaller cardinality.

The paper is organized as follows. We start in
Section~\ref{prelim} with a translation of the problem to a
certain brace associated to the given solution $(X,r)$. Then, in
Section~\ref{primit} we prove our main result. The success of our
approach is thus another instance of the phenomenon showing that
nontrivial connections with the theory of braces allow to attack
problems in the area of the Yang-Baxter equation.

\section{Preliminaries} \label{prelim}
Let $X$ be a non-empty set and  let  $r:X\times X \rightarrow
X\times X$ be a map. For $x,y\in X$ we put $r(x,y) =(\sigma_x (y),
\gamma_y (x))$. Recall that $(X,r)$ is an involutive,
non-degenerate, set-theoretic solution of the Yang-Baxter equation
if $r^2=\id$, all the maps $\sigma_x$ and $\gamma_y$ are bijective
maps from $X$ to itself and
  $$r_{12} r_{23} r_{12} =r_{23} r_{12} r_{23},$$
where $r_{12}=r\times \id_X$ and $r_{23}=\id_X\times r$ are maps
from $X^3$ to itself. Because $r^{2}=\id$ one easily verifies that
$\gamma_y(x)=\sigma^{-1}_{\sigma_x(y)}(x)$, for all $x,y\in X$ (see
for example \cite[Proposition~1.6]{ESS}).

\bigskip
\noindent {\bf Convention.} Throughout the paper a solution of the
YBE will mean an involutive, non-degenerate, set-theoretic solution
of the Yang-Baxter equation.
\bigskip

The proof of our result relies on the algebraic structure of the
left brace associated to a solution. Hence, we recall some essential
background. We refer the reader to \cite{CedoSurvey} for details. A
left brace is a set $B$ with two binary operations, $+$ and $\cdot$,
such that $(B,+)$ is an abelian group (the additive group of $B$),
$(B,\cdot)$ is a group (the multiplicative group of $B$), and for
every $a,b,c\in B$,
 \begin{eqnarray} \label{braceeq}
  a\cdot (b+c)+a&=&a\cdot b+a\cdot c.
 \end{eqnarray}
Note that if we denote by $0$ the neutral element of $(B,+)$ and by
$1$ the neutral element of $(B,\cdot)$, then
$$1=1\cdot (0+0)+1=1\cdot 0+1\cdot 0=0.$$
In any left brace $B$ there is an action $\lambda\colon
(B,\cdot)\rightarrow \aut(B,+)$,  called the lambda map of $B$,
defined by $\lambda(a)=\lambda_a$ and $\lambda_{a}(b)=a\cdot b-a$,
for $a,b\in B$. We shall write $a\cdot b=ab$, for all $a,b\in B$.
A trivial brace is a left brace $B$ such that $ab=a+b$, for all
$a,b\in B$, i.e. all $\lambda_a=\id$. The socle of a left brace
$B$ is
$$\soc(B)=\{ a\in B\mid ab=a+b, \mbox{ for all
}b\in B \}.$$ Note that $\soc(B)=\ker(\lambda)$, and thus it is a
normal subgroup of the multiplicative group of $B$. The solution of
the YBE associated to a left brace $B$ is $(B,r_B)$, where
$r_B(a,b)=(\lambda_a(b),\lambda_{\lambda_a(b)}^{-1}(a))$, for all
$a,b\in B$ (see \cite[Lemma~2]{CJOComm}).

A left ideal of a left brace $B$ is a subgroup $L$ of the additive
group of $B$ such that $\lambda_a(b)\in L$, for all $b\in L$ and
all $a\in B$. An ideal of a left brace $B$ is a normal subgroup
$I$ of the multiplicative group of $B$ such that $\lambda_a(b)\in
I$, for all $b\in I$ and all $a\in B$. Note that
\begin{eqnarray}\label{addmult1}
ab^{-1}&=&a-\lambda_{ab^{-1}}(b)
\end{eqnarray}
 for all $a,b\in B$, and
    \begin{eqnarray} \label{addmult2}
     &&a-b=a+\lambda_{b}(b^{-1})= a\lambda_{a^{-1}}(\lambda_b(b^{-1}))= a\lambda_{a^{-1}b}(b^{-1}),
     \end{eqnarray}
for all $a,b\in B$. Hence, every left ideal $L$ of $B$ also is a
subgroup of the multiplicative group of $B$, and every  ideal $I$ of
a left brace $B$ also is a subgroup of the additive group of $B$. It
is known that $\soc(B)$ is an ideal of the left brace $B$ (see
\cite[Proposition~7]{R07}).

It  also is well-known that the multiplicative group of a finite
left brace is solvable (see \cite[Theorem~5.2]{CedoSurvey}). Note
that if $B$ is a finite left brace, then, because each $\lambda_a\in
\Aut (B,+)$, the Sylow subgroups $B_1,\dots,B_k$ of the additive
group of $B$ are left ideals. Because of (\ref{addmult1}) and
(\ref{addmult2}) we also get that
$$B=B_1 + \cdots + B_k = B_1\cdots B_k \mbox{ and }
B_iB_j=B_i + B_j = B_j + B_i = B_jB_i.$$ Hence $\{ B_1,\dots ,B_k\}$
is a Sylow system of the solvable group $(B,\cdot)$, i.e. a complete
set of multiplicative Sylow subgroups such that any two are
permutable.

Recall that if $(X,r)$ is a solution of the YBE, with
$r(x,y)=(\sigma_x(y),\gamma_y(x))$, then its structure group
$G(X,r)=\gr(x\in X\mid xy=\sigma_x(y)\gamma_y(x),\mbox{ for all
}x,y\in X)$ has a natural structure of left brace such that
$\lambda_x(y)=\sigma_x(y)$, for all $x,y\in X$. The additive group
of $G(X,r)$ is the free abelian group with basis $X$. The
permutation group $\mathcal{G}(X,r)=\gr(\sigma_x\mid x\in X)$ of
$(X,r)$ is a subgroup of the symmetric group $\Sym_X$ on $X$.  The
map $x\mapsto \sigma_x$, from $X$ to $\mathcal{G}(X,r)$ extends to a
group homomorphism $\phi: G(X,r)\longrightarrow \mathcal{G}(X,r)$
and $\ker(\phi)=\soc(G(X,r))$. Hence there is a unique structure of
left brace on $\mathcal{G}(X,r)$ such that $\phi$ is a homomorphism
of left braces, this is the natural structure of left brace on
$\mathcal{G}(X,r)$. In particular  $\mathcal{G}(X,r)$ is a solvable
group if $X$ is finite. Note that there is a natural action of
$\mathcal{G}(X,r)$ on the additive group of $G(X,r)$ defined by
$$a\left(\sum_{i=1}^nz_ix_i\right)=\sum_{i=1}^nz_ia(x_i),$$
for all $a\in \mathcal{G}(X,r)$,  $x_i\in X$ and $z_i\in \Z$. It is
known that the map $\varphi\colon G(X,r)\longrightarrow
(G(X,r),+)\rtimes \mathcal{G}(X,r)$ defined by
$\varphi(g)=(g,\phi(g))$, for all $g\in G(X,r)$, is a monomorphism
of groups, and thus $G(X,r)$ is a regular subgroup of the holomorph
of $(G(X,r),+)$.

\begin{lemma}\label{lambda}
Let $(X,r)$ be a solution of the YBE. Then
$\lambda_g(\sigma_x)=\sigma_{g(x)}$, for all $g\in \mathcal{G}(X,r)$
and all $x\in X$.
\end{lemma}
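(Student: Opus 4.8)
The plan is to first establish the identity for the generators $g=\sigma_y$ of $\mathcal{G}(X,r)$ and then bootstrap to arbitrary $g$ by exploiting that the lambda map of a brace is a multiplicative homomorphism.

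For the base case I would use that $\phi\colon G(X,r)\to\mathcal{G}(X,r)$ is a homomorphism of left braces with $\phi(x)=\sigma_x$ for $x\in X$. Any brace homomorphism $f$ automatically intertwines the lambda maps, since $f(\lambda_a(b))=f(ab-a)=f(a)f(b)-f(a)=\lambda_{f(a)}(f(b))$; applying this to $\phi$ at $a=x$, $b=y$ with $x,y\in X$, and recalling that in $G(X,r)$ one has $\lambda_x(y)=\sigma_x(y)$, I obtain $\phi(\sigma_x(y))=\lambda_{\sigma_x}(\sigma_y)$. The crucial observation is that $\sigma_x(y)$ is again an element of $X$, hence a generator, so $\phi(\sigma_x(y))=\sigma_{\sigma_x(y)}$. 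This yields $\lambda_{\sigma_x}(\sigma_y)=\sigma_{\sigma_x(y)}$, i.e. the lemma for the generators.

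To pass to a general $g$ I would consider the set $H=\{g\in\mathcal{G}(X,r)\mid \lambda_g(\sigma_x)=\sigma_{g(x)}\text{ for all }x\in X\}$ and show it is a subgroup. Since $\lambda\colon(\mathcal{G}(X,r),\cdot)\to\aut(\mathcal{G}(X,r),+)$ is a group homomorphism, for $g,h\in H$ one computes $\lambda_{gh}(\sigma_x)=\lambda_g(\lambda_h(\sigma_x))=\lambda_g(\sigma_{h(x)})=\sigma_{g(h(x))}=\sigma_{(gh)(x)}$, so $gh\in H$; and from $\lambda_g(\sigma_{g^{-1}(x)})=\sigma_x$ together with $\lambda_{g^{-1}}=\lambda_g^{-1}$ one gets $g^{-1}\in H$. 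As $H$ contains all the generators $\sigma_y$ by the base case and $\mathcal{G}(X,r)=\gr(\sigma_x\mid x\in X)$, this forces $H=\mathcal{G}(X,r)$, which is the claim.

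I do not expect a serious obstacle here; the argument is essentially bookkeeping. The only point requiring genuine care is keeping straight the three roles played by $\sigma_x$ — as a generator of the multiplicative group, as an element of the additive group on which $\lambda_g$ acts, and as a permutation of $X$ — and correctly invoking that $\phi$ respects the brace structure, so that it transports the defining relation $\lambda_x(y)=\sigma_x(y)\in X$ on $G(X,r)$ to the desired identity on $\mathcal{G}(X,r)$.
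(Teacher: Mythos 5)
Your proof is correct and is essentially the paper's argument, reorganized: both hinge on the same two facts, namely that $\phi\colon G(X,r)\to\mathcal{G}(X,r)$ is a brace homomorphism (hence intertwines lambda maps, with $\lambda_x(y)=\sigma_x(y)\in X$ in $G(X,r)$) and that the lambda map is multiplicative over a decomposition of $g$ into the generators $\sigma_y$. The paper performs both steps in a single computation, writing $g=\sigma_{x_1}^{\varepsilon_1}\cdots\sigma_{x_s}^{\varepsilon_s}$ and lifting the whole word to $G(X,r)$, whereas you treat the generators first and then extend by a subgroup-closure argument inside $\mathcal{G}(X,r)$ --- a difference of bookkeeping only.
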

\begin{proof}
Let $g\in \mathcal{G}(X,r)$ and let $x\in X$. There exist a
non-negative integer $s$, and  $x_1, \dots, x_s\in X$ and
$\varepsilon_1,\dots,\varepsilon_s\in\{ -1,1\}$, such that
$g=\sigma_{x_1}^{\varepsilon_1}\cdots\sigma_{x_s}^{\varepsilon_s}$.
Let $\phi\colon G(X,r)\longrightarrow \mathcal{G}(X,r)$ be the
homomorphism of left braces such that $\phi(x)=\sigma_x$, for all
$x\in X$. Now we have
\begin{eqnarray*}\lambda_g(\sigma_x)&=&\sigma_{x_1}^{\varepsilon_1}\cdots\sigma_{x_s}^{\varepsilon_s}\sigma_x-\sigma_{x_1}^{\varepsilon_1}\cdots\sigma_{x_s}^{\varepsilon_s}\;
=\;\phi(x_1^{\varepsilon_1}\cdots x_s^{\varepsilon_s}x-x_1^{\varepsilon_1}\cdots x_s^{\varepsilon_s})\\
&=&\phi(\lambda_{x_1^{\varepsilon_1}\cdots
x_s^{\varepsilon_s}}(x))\;
=\sigma_{\lambda_{x_1}^{\varepsilon_1}\cdots
\lambda_{x_s}^{\varepsilon_s}(x)}\;
=\sigma_{\sigma_{x_1}^{\varepsilon_1}\cdots \sigma_{x_s}^{\varepsilon_s}(x)}\\
&=&\sigma_{g(x)}.
\end{eqnarray*}
\end{proof}

The following definition is due to Ballester-Bolinches \cite{BB}.
\begin{definition}
A finite solution $(X,r)$ of the YBE is said to be primitive if its
permutation group $\mathcal{G}(X,r)$ acts primitively on $X$.
\end{definition}

\begin{remark}\label{primitive}
{\rm Note that if $(X,r)$ is a finite primitive solution of
cardinality $|X|>1$, then it is indecomposable because
$\mathcal{G}(X,r)$ acts transitively on $X$. By a well-known result,
that Huppert attributes to Galois \cite[II,~3.2~Satz]{H}, since
$\mathcal{G}(X,r)$ is solvable and because $\mathcal{G}(X,r)$ acts
primitively on $X$, there exist a prime $p$ and a positive integer
$m$ such that $|X|=p^m$, and $\mathcal{G}(X,r)$ has a unique
non-trivial normal abelian subgroup $N$. Furthermore, $N\cong
(\Z/(p))^m$ is the unique minimal normal subgroup of
$\mathcal{G}(X,r)$, $N$ acts transitively on $X$,
$\mathcal{G}(X,r)=N\rtimes A$, where $A$ is the stabilizer of some
$x\in X$  and  $N$ is self-centralizing in $\mathcal{G}(X,r)$.}
\end{remark}

By \cite[Theorem 2.13]{ESS}, for each prime $p$, there is a unique
indecomposable solution $(X,r)$ of the YBE of cardinality $p$. In
this case $\mathcal{G}(X,r)\cong \Z/(p)$, and $(X,r)$ is primitive.

Recall that a solution $(X,r)$ is  said to be irretractable if
$\sigma_x\neq \sigma_y$ for all distinct elements $x,y\in X$,
otherwise the solution $(X,r)$ is retractable.

The following is a result of Ballester-Bolinches \cite{BB}.

\begin{lemma}\label{BB}
Let $(X,r)$ be a primitive solution of cardinality $p^m$, for some
prime $p$ and some integer $m>1$. Then $(X,r)$ is irretractable.
\end{lemma}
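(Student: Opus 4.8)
The plan is to exploit the \emph{retract relation} together with the elementary fact that a primitive permutation group admits no nontrivial invariant equivalence relation. Define a binary relation $\sim$ on $X$ by declaring $x\sim y$ if and only if $\sigma_x=\sigma_y$. This is clearly an equivalence relation, and $(X,r)$ is retractable precisely when some class of $\sim$ has more than one element, i.e. when $\sim$ differs from the equality relation. So the goal becomes: show that under the hypotheses $\sim$ must be equality.

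The first key step is to check that $\sim$ is invariant under the action of $\mathcal{G}(X,r)$ on $X$. Suppose $\sigma_x=\sigma_y$ and let $g\in\mathcal{G}(X,r)$. By Lemma~\ref{lambda} we have $\sigma_{g(x)}=\lambda_g(\sigma_x)$ and $\sigma_{g(y)}=\lambda_g(\sigma_y)$; since $\lambda_g$ is a well-defined map and $\sigma_x=\sigma_y$, we obtain $\sigma_{g(x)}=\sigma_{g(y)}$, that is $g(x)\sim g(y)$. Hence the equivalence classes of $\sim$ form a system of blocks for $\mathcal{G}(X,r)$. Because $\mathcal{G}(X,r)$ acts primitively, and in particular transitively, on $X$, this block system must be trivial: either every class is a singleton, or there is a single class equal to all of $X$.

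If every class of $\sim$ is a singleton, then $\sigma_x\neq\sigma_y$ for all distinct $x,y\in X$ and $(X,r)$ is irretractable, as desired. It thus remains to rule out the other alternative. Suppose $\sim$ has $X$ as its unique class, so that $\sigma_x=\sigma$ is the same permutation for every $x\in X$. Then $\mathcal{G}(X,r)=\langle \sigma\rangle$ is cyclic. Being abelian and transitive on $X$, it acts regularly, so $|\mathcal{G}(X,r)|=|X|=p^m$ and $\sigma$ is a single cycle of length $p^m$. Now I would invoke the elementary fact that a regular permutation group is primitive if and only if its order is prime: concretely, the proper nontrivial subgroup $\langle\sigma^p\rangle$ (of order $p^{m-1}$, which is neither $1$ nor $p^m$ precisely because $m>1$) has orbits forming a nontrivial block system. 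Since $m>1$, this contradicts primitivity of $\mathcal{G}(X,r)$, so the second alternative cannot occur and $(X,r)$ is irretractable.

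The argument is short, and the only point requiring genuine care is the invariance of $\sim$ under the whole group $\mathcal{G}(X,r)$ rather than merely under the generators $\sigma_x$; this is exactly what Lemma~\ref{lambda} supplies, and it is the crux that reduces the statement to the standard theory of blocks for primitive groups. (Alternatively, in the second case one could reach a contradiction using Remark~\ref{primitive}: a cyclic $\mathcal{G}(X,r)$ of order $p^m$ has minimal normal subgroup isomorphic to $\Z/(p)$, which cannot be transitive on $p^m$ points when $m>1$.)
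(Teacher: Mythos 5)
Your proof is correct and follows essentially the same route as the paper's: both use Lemma~\ref{lambda} to show that the relation $\sigma_x=\sigma_y$ is $\mathcal{G}(X,r)$-invariant, so its classes form a block system, and primitivity then forces either irretractability or $\sigma_x=\sigma_y$ for all $x,y\in X$, the latter being impossible because a cyclic group of order $p^m$ with $m>1$ cannot act primitively. Your only departure is that you spell out this final contradiction in detail (regularity of a transitive abelian group, and the nontrivial block system given by the orbits of $\langle\sigma^p\rangle$), a step the paper leaves implicit.
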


\begin{proof}
Suppose that $(X,r)$ is retractable. Let $x,y\in X$ be distinct
elements such that $\sigma_x=\sigma_y$. Let $[x]=\{ y\in X\mid
\sigma_x=\sigma_y\}$. Let $g\in \mathcal{G}(X,r)$.  We shall prove
that $g[x]=[g(x)]$, i.e. $\sigma_{g(x)}=\sigma_{g(t)}$, for all
$t\in [x]$. By Lemma~\ref{lambda},
$$\sigma_{g(x)}=\lambda_g(\sigma_x)=\lambda_g(\sigma_t)=\sigma_{g(t)}.$$
Therefore, $g[x]=[g(x)]$. Thus $[x]$ is a subset of imprimitivity.
Since $(X,r)$ is a primitive solution and $|[x]|>1$, we have that
$[x]=X$. But then $\mathcal{G}(X,r)$ is cyclic of order $p^{m}$, in
contradiction with the fact that the group $\mathcal{G}(X,r)$ is
primitive.
\end{proof}

Let $S_1$, $S_2$ be two non-empty sets. Let $G_i$ be a group of
permutations of $S_i$, for $i=1,2$. Recall that $G_1$ and $G_2$ are
permutationally isomorphic if there exist a bijection
$f_1:S_1\longrightarrow S_2$ and an isomorphism
$f_2:G_1\longrightarrow G_2$ such that $f_1(g(s))=f_2(g)(f_1(s))$,
for all $s\in S_1$ and all $g\in G_1$. In this case $(f_1,f_2)$ is
called a permutational isomorphism from $G_1$ to $G_2$ (see
\cite[Definition~2.1.2]{S}).

\begin{lemma}\label{identification}
Let $(X,r)$ be a finite irretractable solution of the YBE. Consider
the permutation group $\mathcal{G}=\mathcal{G}(X,r)$ of $(X,r)$ with
its natural structure of left brace. Then, the map $\varphi
:X\longrightarrow \mathcal{G}$ defined by $\varphi(x)=\sigma_x$ for
all $x\in X$ is an injective homomorphism of solutions of the YBE
from $(X,r)$ to the solution $(\mathcal{G},r_{\mathcal{G}})$
associated to the left brace $\mathcal{G}$. Furthermore,
$\soc(\mathcal{G})=\{0\}$. Let
$\widetilde{\varphi}:\mathcal{G}\longrightarrow
\mathcal{G}(\mathcal{G},r_{\mathcal{G}})$ be the map defined by
$\widetilde{\varphi}(g)=\lambda_g$, for all $g\in \mathcal{G}$. Let
$\varphi':X\longrightarrow \varphi(X)$ be the bijection defined by
$\varphi'(x)=\sigma_x$, for all $x\in X$. Then
$(\varphi',\widetilde{\varphi})$ is a permutational isomorphism from
$\mathcal{G}$ to $\mathcal{G}(\mathcal{G},r_{\mathcal{G}})$.
\end{lemma}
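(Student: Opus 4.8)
The plan is to treat the three assertions in turn, with essentially all of the content flowing from Lemma~\ref{lambda} together with the irretractability hypothesis.

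First I would handle the claim that $\varphi$ is an injective homomorphism of solutions. Injectivity is immediate: irretractability says exactly that $x\mapsto\sigma_x$ separates the points of $X$. To see that $\varphi$ respects the solutions, I would compute $r_{\mathcal{G}}(\sigma_x,\sigma_y)$ directly from the brace formula $r_{\mathcal{G}}(a,b)=(\lambda_a(b),\lambda_{\lambda_a(b)}^{-1}(a))$ and compare it with $(\varphi\times\varphi)(r(x,y))=(\sigma_{\sigma_x(y)},\sigma_{\gamma_y(x)})$. The first coordinate is $\lambda_{\sigma_x}(\sigma_y)=\sigma_{\sigma_x(y)}$ by Lemma~\ref{lambda}, which matches. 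For the second coordinate the key manipulation is to use that $\lambda$ is a group homomorphism $(\mathcal{G},\cdot)\to\Aut(\mathcal{G},+)$, so that $\lambda_{\sigma_{\sigma_x(y)}}^{-1}=\lambda_{\sigma_{\sigma_x(y)}^{-1}}$; applying Lemma~\ref{lambda} once more then yields $\sigma_{\sigma_{\sigma_x(y)}^{-1}(x)}$, which equals $\sigma_{\gamma_y(x)}$ by the identity $\gamma_y(x)=\sigma_{\sigma_x(y)}^{-1}(x)$ recorded in the Preliminaries. I expect this second-coordinate check --- keeping straight the distinction between $\lambda^{-1}_{(-)}$ and $\lambda_{(-)^{-1}}$ and feeding in the $\gamma$--$\sigma$ relation --- to be the only step requiring genuine care.

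Next, for $\soc(\mathcal{G})=\{0\}$, I would use $\soc(\mathcal{G})=\ker(\lambda)$. If $g\in\mathcal{G}$ satisfies $\lambda_g=\id$, then by Lemma~\ref{lambda} we get $\sigma_{g(x)}=\lambda_g(\sigma_x)=\sigma_x$ for every $x\in X$; irretractability forces $g(x)=x$ for all $x$, so $g$ is the identity permutation, which is the neutral element of the multiplicative group and hence the additive identity $0$ of the brace $\mathcal{G}$. Thus $\soc(\mathcal{G})$ is trivial.

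Finally, for the permutational isomorphism, $\varphi'$ is a bijection onto $\varphi(X)$ by construction together with the injectivity established above. The map $\widetilde{\varphi}(g)=\lambda_g$ is a group homomorphism because $\lambda$ is one; its image is $\{\lambda_g\mid g\in\mathcal{G}\}=\mathcal{G}(\mathcal{G},r_{\mathcal{G}})$, so $\widetilde{\varphi}$ is surjective, and it is injective precisely because $\ker(\lambda)=\soc(\mathcal{G})=\{0\}$ from the previous step --- so the socle computation is exactly what upgrades $\widetilde{\varphi}$ to an isomorphism. Lemma~\ref{lambda} also shows $\lambda_g(\sigma_x)=\sigma_{g(x)}\in\varphi(X)$, so $\varphi(X)$ is invariant under $\mathcal{G}(\mathcal{G},r_{\mathcal{G}})$, which therefore genuinely acts on $\varphi(X)$; the same computation shows this restricted action is faithful, so regarding $\mathcal{G}(\mathcal{G},r_{\mathcal{G}})$ as a permutation group of $\varphi(X)$ is legitimate. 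The required compatibility $\varphi'(g(x))=\widetilde{\varphi}(g)(\varphi'(x))$ is then nothing but the restatement $\sigma_{g(x)}=\lambda_g(\sigma_x)$ of Lemma~\ref{lambda}, completing the verification that $(\varphi',\widetilde{\varphi})$ is a permutational isomorphism.
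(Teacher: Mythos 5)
Your proposal is correct and follows essentially the same route as the paper: the solution-homomorphism check via Lemma~\ref{lambda} and the identity $\lambda^{-1}_{\sigma_{\sigma_x(y)}}=\lambda_{\sigma^{-1}_{\sigma_x(y)}}$, the socle computation via irretractability, and the compatibility $\sigma_{g(x)}=\lambda_g(\sigma_x)$ for the permutational isomorphism all match the paper's argument. Your extra remark that $\varphi(X)$ is invariant under $\mathcal{G}(\mathcal{G},r_{\mathcal{G}})$, so that the restricted action is well defined, is a point the paper leaves implicit, but it is not a different approach.
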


\begin{proof}
Since $(X,r)$ is irretractable, it is clear that $\varphi$ is
injective.  Since $r(x,y)=(\sigma_x(y),
\sigma^{-1}_{\sigma_x(y)}(x))$, and by Lemma~\ref{lambda},
\begin{eqnarray*}
r_{\mathcal{G}}(\sigma_x,\sigma_y)
&=&(\lambda_{\sigma_x}(\sigma_y),\lambda^{-1}_{\lambda_{\sigma_x}(\sigma_y)}(\sigma_x))
\; =\; (\sigma_{\sigma_x(y)},\lambda^{-1}_{\sigma_{\sigma_x(y)}}(\sigma_x))\\
&=&(\sigma_{\sigma_x(y)},\lambda_{\sigma^{-1}_{\sigma_x(y)}}(\sigma_x))\;
=\; (\sigma_{\sigma_x(y)},\sigma_{\sigma^{-1}_{\sigma_x(y)}(x)}),
\end{eqnarray*}
we have that $\varphi$ is a homomorphism of solutions of the YBE.
Recall that the lambda map of the left brace $\mathcal{G}$ is a
homomorphism of groups from the multiplicative group of
$\mathcal{G}$ to the group $\Aut(\mathcal{G},+)$. Hence
$\widetilde{\varphi}$ is a homomorphism of groups and clearly it is
surjective. Furthermore,
$\ker(\widetilde{\varphi})=\soc(\mathcal{G})$. Let $g\in
\soc(\mathcal{G})$. We have, by Lemma~\ref{lambda},
$\sigma_x=\lambda_g(\sigma_x)=\sigma_{g(x)}$, for all $x\in X$.
Since $(X,r)$ is irretractable, $g(x)=x$, for all $x\in X$. Hence
$g=\id$, and thus $\soc(\mathcal{G})=\{0\}$. Therefore
$\widetilde{\varphi}$ is an isomorphism of groups.

By Lemma~\ref{lambda}, we have that
$$\varphi'(g(x))=\sigma_{g(x)}=\lambda_g(\sigma_x)=\widetilde{\varphi}(g)(\varphi'(x)),$$
for all $x\in X$ and all $g\in \mathcal{G}$. Therefore
$(\varphi',\widetilde{\varphi})$ is a permutational isomorphism from
$\mathcal{G}$ to $\mathcal{G}(\mathcal{G},r_{\mathcal{G}})$, and the
result is proven.
\end{proof}

The following result is well-known.
\begin{lemma}\label{decomp}
Let $B$ be a finite left brace. Let $p_1,\dots ,p_k$ be the prime
factors of $|B|$. Let $B_i$ be the Sylow $p_i$-subgroup of the
additive group of $B$ and let $b_i\in B_i$, for $i=1,\dots ,k$.
Since $B_iB_j=B_jB_i$, for $i\neq j$, there exist unique $a_{i,j}\in
B_j$ and $c_{i,j}\in B_i$ such that $b_ib_j=a_{i,j}c_{i,j}$. Then
$\lambda_{b_i}(b_j)=a_{i,j}$.
\end{lemma}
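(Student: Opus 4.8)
The plan is to exhibit one multiplicative decomposition of $b_ib_j$ of the required shape, with the $B_j$-factor equal to $\lambda_{b_i}(b_j)$, and then to invoke the uniqueness already recorded in the statement. The whole argument rests on translating the product $b_ib_j$ into the additive group, where the two Sylow subgroups sit as direct summands.

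First I would rewrite the product additively. By the very definition of the lambda map, $\lambda_{b_i}(b_j)=b_ib_j-b_i$, so that $b_ib_j=b_i+\lambda_{b_i}(b_j)$, and since $(B,+)$ is abelian this equals $\lambda_{b_i}(b_j)+b_i$ as well. Put $a:=\lambda_{b_i}(b_j)$. Because $B_j$, being a Sylow subgroup of $(B,+)$, is a left ideal of $B$ and $b_j\in B_j$, the defining closure property of a left ideal gives $a=\lambda_{b_i}(b_j)\in B_j$. Thus I already have the candidate $B_j$-factor, and it only remains to absorb $b_i$ into a $B_i$-factor on the right.

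Next I would produce that $B_i$-factor. The map $\lambda_a\in\aut(B,+)$ carries the left ideal $B_i$ into itself; as $B_i$ is finite and $\lambda_a$ is injective, $\lambda_a$ restricts to a bijection of $B_i$. Hence there is a unique $c\in B_i$ with $\lambda_a(c)=b_i$, and then $ac=a+\lambda_a(c)=a+b_i=b_ib_j$. This realizes $b_ib_j=ac$ with $a\in B_j$ and $c\in B_i$. Comparing with the given decomposition $b_ib_j=a_{i,j}c_{i,j}$ and using its uniqueness yields $a_{i,j}=a=\lambda_{b_i}(b_j)$, as claimed. (The uniqueness itself holds because $|B_i|$ and $|B_j|$ are powers of distinct primes, so $B_i\cap B_j=\{0\}$ and $|B_jB_i|=|B_j|\,|B_i|$, forcing a unique expression of each element of $B_jB_i=B_iB_j$ as a product of a $B_j$-element and a $B_i$-element.)

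This is a short lemma, and I do not anticipate a genuine obstacle; the only points demanding care are the commutativity of $+$, which is what lets me match the prescribed order of the factors $a_{i,j}\in B_j$ then $c_{i,j}\in B_i$, and the surjectivity of $\lambda_a$ on the finite left ideal $B_i$, which is what guarantees the existence of the second factor $c$.
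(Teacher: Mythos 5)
Your proof is correct and takes essentially the same route as the paper: both exhibit the factorization $b_ib_j=\lambda_{b_i}(b_j)\cdot\lambda_{\lambda_{b_i}(b_j)^{-1}}(b_i)$ with the factors lying in the left ideals $B_j$ and $B_i$, and conclude by uniqueness of the decomposition. The only difference is that the paper cites this identity from \cite[Lemma~2(i)]{CJOComm}, whereas you rederive it in-line (your $c=\lambda_a^{-1}(b_i)=\lambda_{\lambda_{b_i}(b_j)^{-1}}(b_i)$ is exactly the paper's second factor), which makes your argument self-contained but mathematically identical.
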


\begin{proof}
Let $i\neq j$. By \cite[Lemma~2(i)]{CJOComm},
$b_ib_j=\lambda_{b_i}(b_j)\lambda_{\lambda_{b_i}(b_j)^{-1}}(b_i)$.
Since $B_l$ is a left ideal of $B$, for all $l=1,\dots ,k$, we have
that $\lambda_{b_i}(b_j)\in B_j$ and
$\lambda_{\lambda_{b_i}(b_j)^{-1}}(b_i)\in B_i$. Since $B_i\cap
B_j=\{0\}$, we get that $a_{i,j}=\lambda_{b_i}(b_j)$.
\end{proof}

\section{Primitive solutions} \label{primit}

We are ready to prove our main result.

\begin{theorem} \label{main}
Let $(X,r)$ be a finite primitive solution of the YBE with $|X|>1$.
Then $|X|$ is prime. Furthermore, $\sigma_x=\sigma_y$, for all
$x,y\in X$, and $\sigma_x$ is a cycle of length $|X|$.
\end{theorem}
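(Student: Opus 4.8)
The plan is to argue by contradiction and to reduce the whole statement to showing that the point stabilizer is trivial. By Remark~\ref{primitive} I already know that $|X|=p^m$ for some prime $p$ and some $m\ge 1$, that $\mathcal{G}=\mathcal{G}(X,r)=N\rtimes A$ with $N\cong(\Z/(p))^m$ the unique minimal normal subgroup, that $N$ is self-centralizing and acts transitively (hence, since $|N|=|X|$, regularly) on $X$, and that $A$ is the stabilizer of a point $x_0$. It therefore suffices to prove that $m=1$. So I assume $m\ge 2$. Then Lemma~\ref{BB} shows that $(X,r)$ is irretractable, and Lemma~\ref{identification} lets me pass to the solution $(\mathcal{G},r_{\mathcal{G}})$: the group $\mathcal{G}$ carries a brace structure with $\soc(\mathcal{G})=\{0\}$, the map $x\mapsto\sigma_x$ identifies $X$ with $\varphi(X)=\{\sigma_x\mid x\in X\}\subseteq\mathcal{G}$, and under this identification the primitive action of $\mathcal{G}$ on $X$ becomes the action $g\cdot\sigma_x=\lambda_g(\sigma_x)=\sigma_{g(x)}$ of the multiplicative group on $\varphi(X)$ (Lemma~\ref{lambda}). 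Two facts I would record at once: $\varphi(X)$ generates $(\mathcal{G},+)$ (as the image of the free abelian basis $X$ of $G(X,r)$ under the additive homomorphism $\phi$), and $\varphi(X)$ is a single free $\lambda_N$-orbit, so, writing $u:=\sigma_{x_0}$, the additive group is the cyclic $\Z[N]$-module $\Z[N]\cdot u$ and $A=\{g\in\mathcal{G}\mid\lambda_g(u)=u\}$.

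Three structural tools would then drive the argument. First, since $\soc(\mathcal{G})=\ker(\lambda)=\{0\}$, the lambda map embeds $(\mathcal{G},\cdot)$ into $\Aut(\mathcal{G},+)=\prod_i\Aut(\mathcal{G}_i)$, where $\mathcal{G}_1,\dots,\mathcal{G}_k$ are the Sylow subgroups of $(\mathcal{G},+)$. Second, as recalled before Lemma~\ref{lambda}, these $\mathcal{G}_i$ are left ideals and form a multiplicative Sylow system; since $N$ is a normal $p$-subgroup of $(\mathcal{G},\cdot)$ it lies in the $p$-component $\mathcal{G}_1$. Third, and most importantly, I would use a \emph{block principle}: if $L$ is any left ideal, then $\lambda_g(c+L)=\lambda_g(c)+L$ for all $g$, so the nonempty sets $\varphi(X)\cap(c+L)$ form a $\mathcal{G}$-invariant partition of $\varphi(X)$. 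By primitivity this partition is trivial, i.e. either $\varphi(X)$ lies in a single coset of $L$ (equivalently $V\subseteq L$, where $V:=\langle\sigma_x-\sigma_y\mid x,y\in X\rangle$ is itself a left ideal) or distinct elements of $\varphi(X)$ lie in distinct cosets, so $\varphi(X)$ embeds into $(\mathcal{G},+)/L$.

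The heart of the proof is to show that $A=1$, and this is the step I expect to be the main obstacle. The ingredients I would combine are that $A$, acting by conjugation, is a \emph{faithful and irreducible} group of automorphisms of $N\cong(\Z/(p))^m$ (because $N$ is self-centralizing and minimal normal), and the additive picture above. Since $A$ fixes $u$ and $(\mathcal{G},+)/V$ is cyclic, generated by the image of $u$, the group $A$ acts trivially on $(\mathcal{G},+)/V$; the aim is to show, via the block principle applied to the Hall $p'$-left ideal $\mathcal{G}_2+\cdots+\mathcal{G}_k$, that $V\subseteq\mathcal{G}_1$, i.e. $V$ is a $p$-group, so that an element of $A$ acting trivially on $V$ becomes unipotent of $p$-power order and the kernel of $A$ on $V$ is a $p$-group. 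Matching the resulting faithful $A$-module $V$ with $N$ (using the fixed-point left ideal $\mathcal{G}^N=\{b\mid\lambda_n(b)=b\ \text{for all }n\in N\}$, which is $\lambda$-invariant, and Lemma~\ref{decomp} to track products across the Sylow components) is the delicate point: one must rule out the a priori possible configuration in which $A\neq1$ acts faithfully and irreducibly on $V\cong N$ while fixing the cyclic complement spanned by $u$. I expect the contradiction to emerge from the cocycle identity $\lambda_{g+\lambda_g(h)}=\lambda_g\lambda_h$ together with self-centralizing, forcing some nonidentity element of $\mathcal{G}$ to act trivially by $\lambda$, i.e. to lie in $\soc(\mathcal{G})=\{0\}$ — impossible. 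Controlling the $p'$-part of $(\mathcal{G},+)$, equivalently the contribution of $u$, is where the real work lies.

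Once $A=1$ is established, $\mathcal{G}=N\cong(\Z/(p))^m$ is abelian and acts regularly on $X$. A regular abelian permutation group is primitive only if it has no proper nontrivial subgroups, hence only if it is cyclic of prime order; thus $m=1$, contradicting $m\ge 2$ and proving that $|X|$ is prime. Finally, for $|X|=p$ the solution is the unique indecomposable solution of cardinality $p$ recalled after Remark~\ref{primitive}, for which $\mathcal{G}(X,r)\cong\Z/(p)$; there all the maps $\sigma_x$ coincide with one fixed cycle of length $p$, which gives the remaining assertions.
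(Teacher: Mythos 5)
Your setup is sound and runs parallel to the paper's: the reduction via Remark~\ref{primitive} to $|X|=p^m$ with $\mathcal{G}=N\rtimes A$, the passage through Lemmas~\ref{BB} and~\ref{identification} to the brace $\mathcal{G}$ with $\soc(\mathcal{G})=\{0\}$ acting on $\widehat{X}=\{\sigma_x\mid x\in X\}$ by $\lambda$, the Sylow decomposition of $(\mathcal{G},+)$ into left ideals $B_1\oplus\cdots\oplus B_k$ with $N\subseteq B_1$, and the observation that $\widehat{X}$ generates $(\mathcal{G},+)$ as a $\Z[N]$-module are all correct; your ``block principle'' (cosets of a left ideal cut $\widehat{X}$ into a block system) is a genuinely nice tool that the paper does not state explicitly. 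But there is a real gap exactly where you flag it: the central step --- proving $A=1$, or deriving any contradiction from $m\geq 2$ --- is never carried out. Writing that you ``expect the contradiction to emerge from the cocycle identity $\lambda_{g+\lambda_g(h)}=\lambda_g\lambda_h$ together with self-centralizing'' is a hope, not an argument. Even your intermediate goal $V\subseteq B_1$ is not established: the block principle only yields a dichotomy, and excluding the unwanted horn (distinct $\sigma_x$ in distinct cosets) is not a counting matter --- a $p$-group can perfectly well act with a faithful orbit inside a coset space of $p'$-order. Ruling it out already requires knowing that $\lambda_N$ is trivial on the $p'$-components, which is precisely the kind of statement you have not proved; likewise the ``matching'' of the conjugation module $N$ with the additive module $V$, which you call the delicate point, is left entirely open.

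For comparison, here is the concrete chain the paper uses where your sketch stops. Since $(\mathcal{G},\cdot)$ is solvable and $A$ is maximal, one may conjugate so that the Hall $p'$-subgroup $B_2\oplus\cdots\oplus B_k$ lies in $A$. Normality of $N$ in $(\mathcal{G},\cdot)$ together with Lemma~\ref{decomp} gives $(\lambda_n)_{|B_2\oplus\cdots\oplus B_k}=\id$ for all $n\in N$. Writing $\sigma_x=x_1+\cdots+x_k$ with $x_i\in B_i$, the facts that $A$ fixes $\sigma_x$, that $\mathcal{G}=NA$, and that $\widehat{X}$ generates $(\mathcal{G},+)$ force $B_i=\langle x_i\rangle_+$ for $i\geq 2$ and then $(\lambda_g)_{|B_2\oplus\cdots\oplus B_k}=\id$ for \emph{all} $g\in\mathcal{G}$. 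This makes $(B_1,\cdot)$ normal in $(\mathcal{G},\cdot)$, and self-centralization of $N$ yields $N=Z(B_1)=B_1$. Finally, a direct brace computation shows $\lambda_a(\sigma_x)=ax_1a^{-1}x_2\cdots x_k$ for $a\in A$, hence $ax_1a^{-1}=x_1$; since the abelian group $N$ also centralizes $x_1$, the subgroup $\langle x_1\rangle$ is normal in $\mathcal{G}$, so minimality of $N$ gives $N=\langle x_1\rangle$, cyclic --- contradicting $m>1$. Note that the paper's terminal contradiction is ``$N$ is cyclic,'' not ``$A=1$''; your surrounding logic (regular abelian primitive groups have prime order, and the prime case is handled by \cite[Theorem~2.13]{ESS}) is fine, but it only matters once this missing core is supplied. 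As it stands, the proposal is a plan rather than a proof.
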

\begin{proof}
 Let $(X,r)$ be a finite primitive solution of the YBE. Suppose that
 $|X|>1$ and that $|X|$  is not prime. Let $\mathcal{G}=\mathcal{G}(X,r)$.
Then, by Remark~\ref{primitive}, $(X,r)$ is indecomposable and
$|X|=p^m$, for some prime $p$ and integer $m>1$. Furthermore,
$\mathcal{G}$ is solvable and $\mathcal{G}=N\rtimes A$, where
$N\cong (\Z/(p))^m$ is the unique minimal normal subgroup of
$\mathcal{G}$ and $A$ is the stabilizer of some $x\in X$ and a
maximal subgroup of $\mathcal{G}$. Moreover, $N$ is
self-centralizing  in $\mathcal{G}(X,r)$.

Recall that $\mathcal{G}$ has a natural structure of left brace. By
Lemma~\ref{identification}, the multiplicative group of the left
brace $\mathcal{G}$ acts primitively on $\widehat{X}=\{\sigma_x\mid
x\in X\}$, by the lambda map. Furthermore $A={\rm
{Stab}}_{\mathcal{G}} (\sigma_x)$, for some $x\in X$.

As mentioned earlier,  $\mathcal{G}=B_1 \oplus \cdots \oplus B_k$,
where $B_i$ is the Sylow $p_i$-subgroup of $(\mathcal{G},+)$, and
each $B_i$ is a left ideal of the left brace $\mathcal{G}$ (here
$p_1, \ldots , p_k$ are the prime factors of $|\mathcal{G}|$).
Furthermore, $\mathcal{G}=B_1\cdots B_k$ and $B_iB_j=B_jB_i$, for
all $i,j$.

Without loss of generality, we may assume $p=p_1$. Since the
multiplicative group of $\mathcal{G}$ is solvable,  and $B_2\cdots
B_k=B_2\oplus\cdots \oplus B_k$ is a Hall $p_1'$-subgroup of the
multiplicative group of $\mathcal{G}$, there exists $g\in
\mathcal{G}$ such that $B_2\cdots B_k\subseteq gAg^{-1}$. Since
$gAg^{-1}$ is the stabilizer of $\sigma_{g(x)}$, we may assume that
$B_2 \oplus \cdots \oplus B_k \subseteq A$. Since
$N\vartriangleleft \mathcal{G}$, $N\subseteq B_1$.

Let $2\leq i \leq k$ and $b\in B_i$. Since $N$ is a normal subgroup
of $(\mathcal{G},\cdot )$, for every $n\in N$, there exists $n'\in
N$ such that $n\cdot b=b\cdot n'$. Hence, because $N\subseteq B_1$,
by Lemma~\ref{decomp},  we have that
   $\lambda_n(b) = b$.
So $(\lambda_{n})_{|B_i}=\id$, for every $i=2, \ldots , k$ and every
$n\in N$. Since $\lambda_n \in {\rm Aut}(\mathcal{G},+)$, it follows
that
  \begin{eqnarray} \label{identity}
   (\lambda_{n})_{|B_2 \oplus \cdots \oplus B_k} &=&\id.
   \end{eqnarray}
Write $\sigma_x=x_1 + \cdots + x_k$ with each $x_i \in B_i$. As
$\lambda_a (\sigma_x) =\sigma_x$, for all $a\in A$ and because each
$B_i$ is a left ideal, we get that $\lambda_a (x_i) =x_i$ for any
$a\in A$. Hence,
 \begin{eqnarray}
  \lambda_g (x_i )&=&  x_i, \mbox{ for any } i \geq 2 \mbox{ and any } g\in \mathcal{G}. \label{*}
  \end{eqnarray}
Note that, because of (\ref{identity}), for every $n\in N$,
  $$\lambda_{n}(\sigma_x) =\lambda_{n}(x_1) +\cdots + \lambda_{n}(x_k) =\lambda_{n}(x_{1}) +x_2 +\cdots +x_k$$
Since $\mathcal{G}$ acts transitively on $\widehat{X}$,
${\mathcal G}=NA$ and $A$ stabilizes $\sigma_x$,
\begin{eqnarray*}
\widehat{X}&=&\{\lambda_g(\sigma_x)\mid g\in\mathcal{G}\}\;
=\; \{\lambda_n(\sigma_x)\mid n\in N\}\\
&=&\{\lambda_n(x_1)+x_2+\dots+x_k\mid n\in N\} .
\end{eqnarray*}
For every subset $S$ of $\mathcal{G}$, we denote by $\langle
S\rangle_{\cdot}$ the subgroup of $(\mathcal{G},\cdot)$ generated by
$S$, and  by $\langle S\rangle_+$ will denote the subgroup of
$(\mathcal{G},+)$ generated by $S$. By (\ref{addmult1}) and
(\ref{addmult2}), we have that
  $$\mathcal{G}=\langle \widehat{X}\rangle_{\cdot}=\langle \widehat{X}\rangle_{+} = \langle X_1\rangle_{+} + \cdots +\langle X_k\rangle_{+},$$
where $X_1 = \{ \lambda_{n} (x_1 ) \mid n\in N\}$, and $X_i=\{
x_i\}$, for $i=2,\dots, k$. Thus, $B_j=\langle X_j\rangle_{+}$,  for
every $j$, and
   \begin{eqnarray}
       \mathcal{G}&=&\langle X_{1} \rangle_{+} \oplus \langle x_{2} \rangle_{+}
\oplus \cdots \oplus \langle x_{k} \rangle_{+}
        \; = B_1 \oplus B_2 \cdots \oplus B_k . \label{**}
   \end{eqnarray}
Furthermore, from (\ref{*}) and (\ref{**})  we get that
\begin{equation}\label{****}
(\lambda_{g})_{|B_2 \oplus \cdots \oplus B_k} =\id \; \mbox{ for any
} g\in \mathcal{G}.
\end{equation}
Using also Lemma~\ref{decomp}, we
obtain, for $b_1\in B_1$ and $b\in B_i$ with $i\geq 2$, that there
exists $b'\in B_1$ such that
  $$b^{-1}b_1b  =b^{-1}\lambda_{b_{1}} (b) b' = b^{-1}b b'=b'\in B_1.$$
Hence, we have shown that
   $$ (B_1,\cdot ) \lhd (\mathcal{G},\cdot ).$$
Let $Z(B_1)$ be the center of $(B_1,\cdot)$.  As
$B_{1}\supseteq N$ is a nontrivial $p$-group,  we have that
$Z(B_1)$ is a normal non-trivial subgroup of $(\mathcal{G},\cdot
)$. Since $N$ is the unique minimal normal subgroup of
$(\mathcal{G},\cdot )$, We have that $N\subseteq Z(B_1)$. Since
$N$ is self-centralizing in $(\mathcal{G},\cdot )$, we have that
$N=Z(B_1)=B_1$.

So, $N=B_1$ is an ideal of the left brace $\mathcal{G}$,
$A=B_2+\cdots +B_k$ is a left ideal. Note that by (\ref{****}),
$A=B_2\oplus\cdots\oplus B_k$ is a trivial brace. Thus, using
(\ref{****}) again, we have
$$\sigma_x=x_1+\cdots +x_k= x_1 +\lambda_{x_1}(x_2 + \cdots  +x_k)
=x_1(x_2+\cdots +x_k)=x_1x_2\cdots x_k,$$  and $x_1\in N$ and
$x_2\cdots x_k\in A$. Since $A={\rm {Stab}}_{\mathcal{G}}
(\sigma_x)$, for every $a\in A$, we have
\begin{eqnarray*}
\sigma_x&=&\lambda_a(\sigma_x)\; =\; a\sigma_x-a
\; =\; ax_1x_2\cdots x_k-a \\
&=&ax_1a^{-1}ax_2\cdots x_k-a
\; =\;ax_1a^{-1}+\lambda_{ax_1a^{-1}}(ax_2\cdots x_k)-a\\
&=&ax_1a^{-1}+ax_2\cdots x_k-a \quad\mbox{ (by (\ref{****}))}\\
&=&ax_1a^{-1}+x_2\cdots x_k \quad \mbox{ (since $A$ is a trivial brace)}\\
&=&ax_1a^{-1}+\lambda_{ax_{1}a^{-1}} (x_2\cdots x_k) \quad \mbox{ ( by (\ref{****}))}\\
&=&ax_1a^{-1}x_2\cdots x_n.
\end{eqnarray*}
Hence $ax_1a^{-1}=x_1$, for all $a\in A$. Since $x_1\in N$, we get
that $\langle x_1\rangle$ is a normal subgroup of $\mathcal{G}$, and
thus $N=\langle x_1\rangle$. But $N$ is not cyclic, a contradiction.
Therefore $|X|$ is prime.

The second part of the result follows by \cite[Theorem~2.13]{ESS}.
\end{proof}

\vspace{30pt}
 \noindent \begin{tabular}{llllllll}
  F. Ced\'o && E. Jespers\\
 Departament de Matem\`atiques &&  Department of Mathematics \\
 Universitat Aut\`onoma de Barcelona &&  Vrije Universiteit Brussel \\
08193 Bellaterra (Barcelona), Spain    && Pleinlaan 2, 1050 Brussel, Belgium \\
 cedo@mat.uab.cat && Eric.Jespers@vub.be \\ \\
 J. Okni\'{n}ski && \\ Institute of
Mathematics &&
\\  Warsaw University &&\\
 Banacha 2, 02-097 Warsaw, Poland &&\\
 okninski@mimuw.edu.pl &&
\end{tabular}

\end{document}